\definecolor{orangetwo}{rgb}{0.9803922, 0.4196078, 0.3058824}
\definecolor{freading}{rgb}{ 0.8980392,1.0000000,0.9058824}
\definecolor{gray}{rgb}{0.25,0.25,0.25}
\definecolor{dgray}{rgb}{0.15,0.15,0.15}
\definecolor{white}{rgb}{1,1,1}
\definecolor{theoremback}{rgb}{0.8235294, 0.8627451, 0.9137255}
\definecolor{exampletitle}{rgb}{0.6705882, 0.7568627, 0.8705882}
\newtheoremstyle{nthm}
{1pt}
{1pt}
{\itshape}
{0em}
{\bfseries}
{.}
{.5em}
{}
\newtheoremstyle{ndef}
{3pt}
{3pt}
{}
{0em}
{\bfseries}
{.}
{.5em}
{}
\newtheoremstyle{nrem}
{3pt}
{3pt}
{}
{0em}
{\itshape}
{.}
{.5em}
{}
\theoremstyle{nthm}
\newmdtheoremenv[linecolor=theoremback,linewidth=2,backgroundcolor=theoremback,skipabove=6,skipbelow=6]{thm}[subsection]{Theorem}
\newtheorem{prop}[subsection]{Proposition}
\newtheorem{conj}[subsection]{Conjecture}
\theoremstyle{ndef}
\theoremstyle{nrem}
\newtheorem{rem}[subsection]{Remark}
\newtheorem{exmp}[subsection]{Example}
\newcommand{\F}{\mathbb{F}}
\newcommand{\G}{\mathbb{G}}
\newcommand{\Z}{\mathbb{Z}}
\newcommand{\UU}{\mathrm{U}}
\newcommand{\afr}{\mathfrak{a}}
\newcommand{\gfr}{\mathfrak{g}}
\newcommand{\ofr}{\mathfrak{o}}
\newcommand{\ufr}{\mathfrak{u}}
\newcommand{\vfr}{\mathfrak{v}}
\newcommand{\Fcl}{\mathcal{F}}
    \DeclareMathOperator{\Gal}{Gal}
	\newcommand{\sets}{\mathbf{Set}}
	\newcommand{\schemes}{\mathbf{Sch}}
	\DeclareMathOperator{\Hom}{Hom}
	\DeclareMathOperator{\Spec}{Spec}
	\newcommand{\GL}{\mathrm{GL}}
\DeclareMathOperator{\Ad}{Ad}
\newcommand{\op}{\mathrm{op}}
\DeclareMathOperator{\Gr}{Gr}
\newcommand{\df}[1]{\textcolor{orangetwo}{{\bfseries #1}}\index{#1}}
\newsavebox{\fmbox}
\DeclareMathOperator{\diag}{diag}
\begin{document}
\title[Affine Springer Fibers for Symmetric Spaces]{ {\bfseries Dimensions of Affine Springer Fibers for some $\GL_n$ Symmetric Spaces} }
\author{Jason K.C. Pol\'ak}
\begin{abstract}
	We determine a formula for the dimension of a family of affine Springer fibers associated to a symmetric space arising from the block diagonal embedding $\GL_n\times\GL_n\to\GL_{2n}$. As an application, we determine the dimension of affine Springer fibers attached to certain unitary symmetric spaces.
\end{abstract}
\date{\today}
\maketitle

\thispagestyle{plain}
\tableofcontents

\section{Introduction}
Let $G$ be a connected reductive algebraic group over a finite field $k$. Let $F= k( (t))$ be the Laurent series field over $k$ and let $\ofr = k[ [t]]$ be the ring of integers of $F$. Write $v:F^\times\to \Z$ for the valuation on $F$ corresponding to the uniformiser $t$. For any representation $\rho$ of $G$ on a vector space $V$ defined over $\ofr$, and $\gamma\in V(F)$ the set\begin{align*}
	X(G,\gamma)(k) = \{ g\in G(F)/G(\ofr) : \rho(g)^{-1}\gamma\in V(\ofr)\},
\end{align*}
is the set of $k$-points of an ind-scheme $X(G,\gamma)$ over $k$ called the affine Springer fiber with respect to $\gamma$. We have left $V$ out of the notation since the representation shall be clear from context. Finding a formula for the dimension of $X(G,\gamma)$ when its dimension is actually finite is an important and intriguing problem in many parts of representation theory. When $\rho$ is the adjoint representation, this was done by Kazhdan and Lusztig for $G$ split over $F$ with $\gamma$ regular and in the Lie algebra of a split maximal torus~\cite{Kazhdan1988}, and by Bezrukavnikov for any connected reductive $G$ and any regular semisimple $\gamma$ \cite{Bezrukavnikov1996}.

Let $G = \GL_{2n}$ and consider the involution $\theta:G\to G$ defined by
\begin{align*}\theta(x) = 
	\begin{pmatrix}
		I_n & 0\\
		0 & -I_n
	\end{pmatrix}x\begin{pmatrix}
		I_n & 0\\
		0 & -I_n
	\end{pmatrix}.
\end{align*}
The fixed-point group $G_0 := G^\theta\cong \GL_n\times\GL_n$ sits inside $\GL_{2n}$ block-diagonally. Write $\gfr$ for the Lie algebra of $G$. We consider the representation of $G_0$ on $\gfr_1 := \{ x\in \gfr : \theta(x) = -x\}$. Let $\gamma\in\gfr_1(F)$ be semisimple, and also \emph{regular}, which means that the orbit $G_0\gamma$ has maximal dimension amongst $G_0$-orbits in $\gfr_1$. In this paper, we compute the dimension of the affine Springer fibers $X(G_0,\gamma)$ when $\gamma$ is regular and semismple, analogous to the formula proved in \S 5 of \cite{Kazhdan1988}. More precisely:
\begin{thm}
	Let $\gamma = \left(\begin{smallmatrix} 0 & X\\ Y & 0\end{smallmatrix}\right) \in\gfr_1(F)$ be a regular semisimple element conjugate to $\left(\begin{smallmatrix}0 & 1\\ \beta & 0\end{smallmatrix}\right)\in\gfr_1(F)$ for some $\beta = \diag(x_1,\dots,x_n)$ with $x_i\not= x_j$ whenever $i\not=j$. Then
	\begin{align*}
		\dim_k X(G_0,\gamma) = v(\det([XY,-]) =\sum_{i < j} v(x_i - x_j).
	\end{align*}
    Here, $XY$ is considered as an element of $\mathfrak{gl}_n$ and $[XY,-]$ is the commutator map restricted to the Lie algebra of the upper triangular Borel subgroup.
\end{thm}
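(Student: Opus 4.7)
The plan is to first reduce to the standard form $\gamma_0 := \left(\begin{smallmatrix} 0 & I_n \\ \beta & 0\end{smallmatrix}\right)$. Because the affine Springer fiber $X(G_0, \gamma)$ depends only on the $G_0(F)$-conjugacy class of $\gamma$ (left translation by a conjugating element is an isomorphism of ind-schemes over the affine Grassmannian), this is harmless. Identifying $G_0(F)/G_0(\ofr)$ with ordered pairs of $\ofr$-lattices in $F^n$ via $\diag(g_1, g_2) \mapsto (L_1, L_2) := (g_1\ofr^n, g_2\ofr^n)$, a direct block-matrix computation gives $g^{-1}\gamma_0 g = \left(\begin{smallmatrix} 0 & g_1^{-1}g_2 \\ g_2^{-1}\beta g_1 & 0\end{smallmatrix}\right)$, so the defining condition $g^{-1}\gamma_0 g \in \gfr_1(\ofr)$ becomes the lattice chain $\beta L_1 \subseteq L_2 \subseteq L_1$. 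In particular $L_1$ is automatically $\beta$-stable.

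The key observation is then the projection $\pi : X(G_0, \gamma) \to X(\GL_n, \beta)$, $(L_1, L_2) \mapsto L_1$, landing in the classical Kazhdan-Lusztig affine Springer fiber $X(\GL_n, \beta) = \{L : \beta L \subseteq L\}$, whose dimension is $\sum_{i<j} v(x_i - x_j)$ by \cite[\S 5]{Kazhdan1988}, precisely the right-hand side of the theorem. The fiber of $\pi$ over $L_1$ is the projective variety of $\ofr$-submodules of the finite-length $\ofr$-module $L_1/\beta L_1$, so the theorem reduces to showing that this family has the same dimension as its base.

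To carry out the dimension count I would stratify $X(\GL_n, \beta)$ by the isomorphism type $\mu$ of $L_1/\beta L_1$, a partition of $N = \sum_i v(x_i)$. Over the open dense cyclic stratum $\mu = (N)$ the quotient is $\ofr/(t^N)$, whose submodule variety is the zero-dimensional chain of length $N+1$, so this stratum contributes $\dim X(\GL_n, \beta)$ to the total. On each non-cyclic stratum the positive-dimensional submodule Grassmannian of $\bigoplus_i \ofr/(t^{\mu_i})$ must be offset by a corresponding drop in the dimension of the stratum; the main obstacle is establishing this uniform bound $\dim Y^\mu + \dim F^\mu \leq \sum_{i<j} v(x_i - x_j)$ across all partitions $\mu \neq (N)$. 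A cleaner alternative I would default to if the stratum bookkeeping proves unwieldy is to pave $X(G_0, \gamma)$ directly by orbits of an Iwahori subgroup of $G_0 \cong \GL_n \times \GL_n$ indexed by its affine Weyl group, computing each orbit's dimension via the length function in direct parallel with the argument of \S 5 in \cite{Kazhdan1988}.
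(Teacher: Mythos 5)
Your setup is correct and genuinely different from the paper's: the lattice description of $G_0(F)/G_0(\ofr)$, the translation of the integrality condition into the chain $\beta L_1 \subseteq L_2 \subseteq L_1$, and the resulting projection $\pi:(L_1,L_2)\mapsto L_1$ onto the Kazhdan--Lusztig fiber $X(\GL_n,\beta)$ are all right. But the proof stops exactly where the theorem begins. Everything you have written reduces the statement to the inequality $\dim Y^\mu + \dim F^\mu \leq \sum_{i<j} v(x_i-x_j)$ over every non-cyclic stratum, and you explicitly label this ``the main obstacle'' without proving it. This is not bookkeeping: over the stratum where $L_1/\beta L_1 \cong \bigoplus_i \ofr/(t^{\mu_i})$ the fiber is a positive-dimensional submodule variety, so you need a quantitative lower bound on the codimension of each stratum in $X(\GL_n,\beta)$ that exactly offsets the growth of the fibers. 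Even your claim that the cyclic stratum is open and dense requires an argument (for the standard lattice $\ofr^n$ the quotient $\bigoplus_i \ofr/(x_i)$ is typically far from cyclic, so density is not obvious from examples), and without density the cyclic stratum need not realize the dimension of the base at all. The fallback via Iwahori orbits is likewise only a plan, with no length computation supplied.

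For comparison, the paper avoids this stratification problem entirely by fibering over the \emph{zero-dimensional} space $X(T,\gamma)$ using the Iwasawa decomposition $G_0(F)=T(F)U(F)G_0(\ofr)$. Each fiber $X(U,\mathrm{Ad}(t)^{-1}\gamma)$ is then analyzed diagonal-by-diagonal in $U$, all fibers of the resulting maps are shown to be equidimensional, and the whole computation collapses to $v(\det\varphi)$ for the explicit linear map $\varphi=[\mathrm{Ad}(t)^{-1}\gamma,-]$, evaluated by row reduction. In particular the paper never invokes the Kazhdan--Lusztig dimension formula as an input; it reproves the answer directly, whereas your route both relies on that formula and still leaves its hardest new ingredient unestablished. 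To salvage your approach you would need to prove the stratum-by-stratum estimate (or carry out the Iwahori paving in detail); as written, the argument is incomplete at its core.
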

To prove this, we may as well assume that $\gamma = \left(\begin{smallmatrix} 0 & 1\\ \beta & 0\end{smallmatrix}\right)$. We then proceed in two steps: first, we show that there is a well-defined map $X(G_0,\gamma)\to X(T,\gamma)$ where $T$ is the diagonal maximal torus, and use the idea in Kazhdan-Lusztig~\cite{Kazhdan1988} to determine the dimensions of the fibers over each $t\in X(T,\gamma)$. Second, we show that the dimension of the fibers is independent of $t$, which is not immediately apparent in our case but was obvious in Kazhdan-Lusztig.

Although affine Springer fibers are interesting in their own right and have been studied for some time now, we are also motivated by the application of our formula to unitary groups as in Example~\ref{exmp:unitary}, and ultimately by the relative trace formula in~\cite{getzw2013}. For these unitary group and other symmetric space representations as studied in~\cite{Kostant1971, levyInvolutions}, we hope to derive a fundamental lemma analogous to the adjoint fundamental lemma proved in~\cite{ngo2010lemme}. Roughly speaking, the fundamental lemma is a formula relating the number of points on affine Springer fibers of one group to the number of points on the affine Springer fibers of another \emph{endoscopic} group. A key part of this formula is the mysterious \emph{transfer factor} that should be related to the dimension of the affine Springer fibers. For the adjoint case, this relationship was worked out in~\cite{ngo2010lemme}, but for symmetric spaces, although an example of a transfer factor was determined in~\cite{Polak2015}, the precise relationship between the dimension of affine Springer fibers and the transfer factors remains elusive.

\subsection{Notations and Conventions} We assume that all reductive groups are connected. We write $\schemes$ for the category of schemes and $\sets$ for the category of sets. If $a_1,\dots,a_n$ are elements of a ring then $\diag(a_1,\dots,a_n)$ denotes the corresponding $n\times n$ diagonal matrix. Any element of $\gfr_1$ is of the form
\begin{align*}
    \begin{pmatrix}
        0 & X\\
        Y & 0
    \end{pmatrix}
\end{align*}
where $X$ and $Y$ are $n\times n$ matrices; for brevity we write this element as $(X,Y)$.

\subsection*{Acknowledgements} The author thanks Jayce R. Getz for discussions and suggesting numerous improvements.

\section{Ind-Schemes}

For any directed system $Y_1\hookrightarrow Y_2\hookrightarrow Y_3\hookrightarrow\cdots$ of schemes where each morphism $Y_i\hookrightarrow Y_{i+1}$ is a closed embedding, we define a functor $Y = \varinjlim Y_i:\schemes^\op\to\sets$ by first setting $Y(S) = \varinjlim \Hom_{\schemes}(S,Y_i)$ for every affine scheme $S$, and then extend $Y$ to a contravariant functor on all schemes by taking its Zariski sheafification. We shall call any functor naturally equivalent to one of this form an \df{ind-scheme}. The dimension of $Y$ is by definition $\sup_i\dim Y_i$, and is independent of its presentation as a directed system. 

From now on we consider schemes over a finite field $k = \F_q$ of odd characteristic. We write $F = k( (t))$ and $\ofr = k[ [t]]$. For us, the prototypical example of an ind-scheme over $k$ is the affine Grassmanian whose set of $k$-points is $G(F)/G(\ofr)$ where $G$ is a reductive algebraic group over $k$. As a functor the affine Grassmanian for $G$, denoted by $\Gr(G)$, is usually defined to be the fpqc-sheafification of the functor defined for each $k$-algebra $R$ by
\begin{align*}
	R \longmapsto G\Bigl(R( (t))\Bigr)/G\Bigl(R[ [t]]\Bigr).
\end{align*}
One can recast this definition into more concrete language:
\begin{prop}[\cite{Richarz2014}, Lemma 1.1]
	The affine Grassmannian represents the functor that assigns to every $k$-algebra $R$ the set of isomorphism classes of pairs $(\Fcl,\varphi)$ where $\Fcl$ is a $G$-torsor over $\Spec(R[ [t]])$ and $\varphi$ is a trivialisation of $\Fcl[t^{-1}]$ over $\Spec(R( (t))$
\end{prop}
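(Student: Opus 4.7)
The plan is to construct a natural transformation $\Phi$ from the presheaf
$R\mapsto G(R( (t)))/G(R[ [t]])$ into the moduli functor $\Mcl$ of pairs
$(\Fcl,\varphi)$, to note that $\Mcl$ is already an fpqc sheaf so that $\Phi$
descends to a morphism $\Gr(G)\to\Mcl$ after sheafification, and then to
verify that this morphism is an isomorphism by checking it is a monomorphism
and an fpqc-local epimorphism.

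To define $\Phi$ I would send $g\in G(R( (t)))$ to the pair
$(\Fcl_0,\varphi_g)$, where $\Fcl_0$ is the trivial $G$-torsor on
$\Spec(R[ [t]])$ and $\varphi_g$ is the trivialisation of $\Fcl_0[t^{-1}]$
obtained by translating by $g$ in the trivial torsor. Replacing $g$ by $gh$
with $h\in G(R[ [t]])$ produces an isomorphic pair, so $\Phi$ descends to
the quotient; conversely, any isomorphism of pairs between
$(\Fcl_0,\varphi_g)$ and $(\Fcl_0,\varphi_{g'})$ is translation by a unique
$h\in G(R[ [t]])$, which forces $g^{-1}g'\in G(R[ [t]])$ and gives
injectivity. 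Since $G$ is affine, fpqc descent for $G$-torsors and for
morphisms of torsors shows that $\Mcl$ is an fpqc sheaf, so $\Phi$ factors
uniquely through $\Gr(G)$.

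The substantive content is surjectivity: given a pair $(\Fcl,\varphi)$ over
$\Spec(R[ [t]])$ one must produce an fpqc cover $R\to R'$ on which
$\Fcl|_{R'[ [t]]}$ becomes trivial, for then any chosen trivialisation
$\psi:\Fcl|_{R'[ [t]]}\cong\Fcl_0$ combines with $\varphi|_{R'( (t))}$ into
an automorphism $\psi\circ\varphi^{-1}$ of the trivial torsor over
$\Spec(R'( (t)))$, hence a unique element $g\in G(R'( (t)))$ whose image
under $\Phi$ is pair-isomorphic to the pullback of $(\Fcl,\varphi)$. The
main obstacle is therefore proving that every $G$-torsor on
$\Spec(R[ [t]])$ trivialises after an fpqc base change on $\Spec(R)$. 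For
smooth affine $G$ (in particular reductive $G$) this proceeds in three
steps: restriction to the closed subscheme
$\Spec(R)\hookrightarrow \Spec(R[ [t]])$ cut out by $t=0$ yields a
$G$-torsor on $\Spec(R)$ which is fpqc-locally trivial by definition, so one
may pass to a cover $R'$ of $R$ on which this restriction is trivial; the
infinitesimal lifting criterion then allows a trivialisation to be lifted
inductively from $\Spec(R'[ [t]]/t^n)$ to $\Spec(R'[ [t]]/t^{n+1})$ using
smoothness of $G$; and since $G$ is affine and
$R'[ [t]]=\varprojlim_n R'[ [t]]/t^n$, the compatible system of
trivialisations glues to a trivialisation on $\Spec(R'[ [t]])$, which
completes the identification of $\Gr(G)$ with $\Mcl$.
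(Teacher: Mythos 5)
The paper offers no proof of this proposition --- it is quoted verbatim from Richarz (Lemma 1.1) as a known description of the affine Grassmannian. Your argument is correct and is essentially the standard proof of that cited lemma: the map $g\mapsto(\Fcl_0,\varphi_g)$ is a monomorphism into the (sheafy) moduli functor of pairs, and local surjectivity reduces to trivialising a $G$-torsor over $\Spec(R[\![t]\!])$ after a faithfully flat base change on $R$, which you correctly obtain by trivialising along $t=0$, lifting through the nilpotent thickenings $\Spec(R'[\![t]\!]/t^{n})$ via smoothness of $G$, and passing to the limit using affineness. The only cosmetic remark is that since $G$ is smooth the cover $R\to R'$ can be taken \'etale (or fppf), which is how one usually sidesteps the set-theoretic issues with fpqc sheafification.
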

When $G = \GL_n$, a $G$-torsor over $\Spec(R[ [t]])$ is the same thing as an algebraic vector bundle of rank~$n$ over $\Spec(R[ [t]])$, which is in turn the same thing as $R[ [t]]$-projective module of rank~$n$. So the $k$-points of the affine Grassmannian for $\GL_n$ can be described as the set isomorphisms $\varphi:\ofr^n\otimes_{\ofr}F \to F^n$ under the following equivalence relation: $\varphi\simeq \varphi'$ if and only if there exists an isomorphism $\beta:\ofr^n\to \ofr^n$ making the diagram
\begin{equation*}
    \SelectTips{cm}{}
	\xymatrix{
    \ofr^n\otimes_{\ofr}F^n\ar[rd]^\varphi\ar[dd]_{\beta\otimes 1} & {}\\
        {}  &F^n\\
        \ofr^n\otimes_{\ofr}F^n\ar[ru]_{\varphi'} & {}
        }
\end{equation*}
commute. In other words, $\Gr(\GL_n)(k)\cong \GL_n(F)/\GL_n(\ofr)$. The following Proposition~\ref{prop:descgkl} is a description of the ind-scheme structure on $\Gr(\GL_n)$ due to Gaitsgory~\cite{Gaitsgory2000}, generalising the ind-scheme structure for semisimple simply connected groups due to Kazhdan and Lusztig.
\begin{prop}\label{prop:descgkl}
	On $k$-schemes, the functor $\Gr(\GL_n)$ is naturally equivalent to the Zariski sheafification of the direct limit $\Gr^1\hookrightarrow \Gr^2\hookrightarrow \cdots$ where $\Gr^m(R)$ is the set of all $R$-flat $t$-stable submodules of $R\otimes_k t^{-m}V[ [t]]/t^{m+1}V[ [t]]$.
\end{prop}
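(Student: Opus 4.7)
The plan is to identify $\Gr(\GL_n)(R)$ with the set of $R[ [t]]$-lattices in $R( (t))^n$, and then to exhaust these lattices by the bounded subfunctors $\Gr^m$.

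By the preceding proposition, an $R$-point of $\Gr(\GL_n)$ is a pair $(\Fcl,\varphi)$ with $\Fcl$ a $\GL_n$-torsor on $\Spec R[ [t]]$ and $\varphi$ a trivialisation over $\Spec R( (t))$. Since $\GL_n$-torsors are the same as rank-$n$ projective modules, such a pair is the same as a projective $R[ [t]]$-module $L$ of rank $n$ together with an identification $L \otimes_{R[ [t]]} R( (t)) \cong R( (t))^n$. Realising $L$ inside $R( (t))^n$ via $\varphi$, I would argue that $\Gr(\GL_n)(R)$ is canonically the set of what I will call \emph{$R$-lattices}: finitely generated $R[ [t]]$-projective submodules of $R( (t))^n$ of rank $n$ that generate $R( (t))^n$ after inverting $t$.

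The next step is to match lattices with points of $\Gr^m$. Given an $R$-flat, $t$-stable submodule $N$ of $M_m := R \otimes_k (t^{-m}V[ [t]]/t^{m+1}V[ [t]])$, I would set $L$ to be the preimage of $N$ under the surjection $t^{-m}R[ [t]]^n \twoheadrightarrow M_m$, so that automatically $t^{m+1}R[ [t]]^n \subset L \subset t^{-m}R[ [t]]^n$. I would then verify, using the local criterion for flatness applied to the $t$-adic filtration on $L$, that the $R$-flatness of $N = L/t^{m+1}R[ [t]]^n$ forces $L$ to be $R[ [t]]$-projective of rank $n$, and conversely that every $R$-lattice sandwiched between these bounds arises uniquely in this way. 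The subfunctor $\Gr^m$ is then representable because $M_m$ is a finite free $R$-module, so the locus of $R$-flat submodules of $M_m$ of each given rank sits inside a classical Grassmannian, and the condition of $t$-stability cuts out a closed subscheme. The inclusion $\Gr^m \hookrightarrow \Gr^{m+1}$, sending $N$ to its preimage along $M_{m+1} \twoheadrightarrow M_m$, is then clearly a closed embedding.

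To conclude, I would show that every $R$-lattice $L$ is Zariski-locally on $\Spec R$ sandwiched between $t^{m+1}R[ [t]]^n$ and $t^{-m}R[ [t]]^n$ for some $m$. Over a field this is the elementary commensurability of lattices in $F^n$; in the relative case it holds after choosing an $R[ [t]]$-basis of $L$ on a Zariski neighbourhood and bounding the $t$-adic valuations of its entries, which is precisely why the direct limit must be Zariski-sheafified to recover the whole functor $\Gr(\GL_n)$. The main technical obstacle is the equivalence between $R[ [t]]$-projectivity of $L$ and $R$-flatness of the finite truncation $L/t^{m+1}R[ [t]]^n$; this rests on a careful application of Nakayama combined with the local flatness criterion rather than anything conceptually deep, but it is what makes $\Gr^m$ the correct subfunctor to consider.
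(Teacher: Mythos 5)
The paper does not actually prove this proposition: it is quoted from Gaitsgory's work (generalising Kazhdan--Lusztig), so there is no argument of the author's to set yours against. Your overall strategy --- identify $\Gr(\GL_n)(R)$ with $R[ [t]]$-lattices in $R( (t))^n$ via the torsor description, exhaust by lattices sandwiched between $t^{m+1}R[ [t]]^n$ and $t^{-m}R[ [t]]^n$, and realise each bounded piece inside a classical Grassmannian of the finite free module $M_m = R\otimes_k t^{-m}V[ [t]]/t^{m+1}V[ [t]]$ --- is the standard and correct route to this statement.

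There is, however, a genuine gap at the step you yourself single out as the technical heart. The claim that $R$-flatness of $N = L/t^{m+1}R[ [t]]^n$ forces $L$ to be $R[ [t]]$-projective of rank $n$ is false. Take $n=1$, $m=0$, $R = k[x]$ and $N = xR \subset R = M_0$. Then $N$ is free, hence $R$-flat, and trivially $t$-stable (multiplication by $t$ is zero on $M_0$), but its preimage $L = xR[ [t]] + tR[ [t]] = (x,t) \subset k[x][ [t]]$ is not a projective $R[ [t]]$-module: localised at the maximal ideal $(x,t)$ it is the maximal ideal of a two-dimensional regular local ring, which has generic rank $1$ but needs two generators, so it is not locally free. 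The correct condition characterising lattices in the sandwich is not flatness of $N$ but $R$-flatness of the quotient $M_m/N$, equivalently that $M_m/N$ be finite projective, equivalently that $N$ be Zariski-locally a direct summand of $M_m$; that condition is also what makes $\Gr^m$ a closed subscheme of an honest Grassmannian (the functor of merely flat submodules is not representable), so your representability paragraph inherits the same problem. To be fair, the statement as printed in the paper carries the same imprecision, and it is harmless for everything done later --- only points over field extensions $k'/k$ are used, where flat, projective and direct summand coincide --- but your proof as written would break exactly at the local-criterion-for-flatness step, so you should reformulate the bounded pieces in terms of flatness of the quotient before running the rest of the (otherwise sound) argument.
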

One can give an ind-scheme structure on the affine Grassmannian for all reductive groups $G$ by showing that a closed embedding $G\hookrightarrow \GL_n$ induces a closed embedding $\Gr(G)\to \Gr(\GL_n)$ of affine Grassmannians, and in fact one can even drop the reductive hypothesis (see~\cite{Gaitsgory2000} for details). Let us now take a look at Example~\ref{exmp:zerodimtorus}, which is the starting point for our investigations into the dimensions of affine Springer fibers.

\begin{exmp}[Dimension for Tori]\label{exmp:zerodimtorus}To get an intuition for the geometric structure of the affine Grassmanian, consider $G = \G_m$. Then $G(F)/G(\ofr) = F^\times/\ofr^\times$ which is isomorphic to $\Z$ as a group. As described in Proposition~\ref{prop:descgkl}, the functor $\Gr(\G_m)$ can be written as a colimit of schemes $Y_1\hookrightarrow Y_2\hookrightarrow\cdots$ where for each $m$ and for any field extension $k'/k$, the set $Y_m(k')$ is the set of all $t$-stable $k'$-subspaces of $t^{-m}k'( (t))/t^{m+1}k'( (t))$. This space is just isomorphic to $k'^{2m + 2}$ where multiplication by $t$ is the \emph{right-shift} linear operator $(a_1,a_2,\dots,a_{2m+2})\mapsto (0,a_1,\dots,a_{2m+1})$. For a fixed integer $0\leq d\leq 2m + 2$, there is a unique $d$-dimensional subspace that is $t$-invariant.
    
    Hence $Y_m(k')$ consists of one point in each irreducible component of the disjoint union over the $2m + 2$ Grassmannians for $t^{-m}k'( (t))/t^{m+1}k'( (t))$ and so $\Gr(\G_m)$ is zero-dimensional. The same argument also shows that $\Gr(T)$ is zero-dimensional whenever $T\cong \G_m^n$.
\end{exmp}

\section{The Fibration}
The formula of Kazhdan and Lusztig was proved by first considering elements in the Lie algebra of a split maximal torus to obtain an explicit formula. The analog here is the following type of element: let $\gamma\in\gfr_1(F)$ be a regular semisimple element of the form
\begin{align}\label{eqn:gammaform}
	\gamma = 
	\begin{pmatrix}
		0 & I_n\\
        \beta & 0
	\end{pmatrix}
\end{align}
where $\beta = \diag(x_1,\dots,x_n)$ with $x_i\not= x_j$ for $i\not= j$ and $I_n$ is the $n\times n$ identity (cf. \cite[Proposition~2.1]{Jacquet1996}). Incidentally, the choice of such a $\gamma$ determines a Cartan subspace, the analogue of a Cartan subalgebra for symmetric spaces, and the formula we will derive gives the dimension of the corresponding affine Springer fiber for any element in such a subspace containing $\gamma$; see Remark~\ref{rem:cartansub} for further details.

We let $T\subset G_0$ be the diagonal maximal split torus and $B\subset G_0$ be the Borel subgroup of upper triangular matrices in each $n\times n$ block. Then we have an Iwasawa decomposition $G_0(F) = T(F)U(F)G_0(\ofr)$ where $U$ is the unipotent radical of $B$.  
\begin{prop}For each $g\in G_0(F)/G_0(\ofr)$, fix a decomposition $g = t_gu_g\in G_0(F)/G_0(\ofr)$ with $t_g\in T(F)$ and $u_g\in U(F)$. Then the map
    \begin{align*}
        p:G_0(F)/G_0(\ofr)&\longrightarrow T(F)/T(\ofr)\\
        g&\longmapsto t_g
    \end{align*}
    is well-defined.
\end{prop}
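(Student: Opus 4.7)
The plan is to reduce well-definedness to the algebraic identity
\begin{align*}
U(F)\cdot T(F)\cap G_0(\ofr) = U(\ofr)\cdot T(\ofr),
\end{align*}
which expresses the essential uniqueness of the Iwasawa decomposition. Granting this, suppose $g = t_1 u_1 k_1 = t_2 u_2 k_2$ in $G_0(F)$, with $t_i \in T(F)$, $u_i\in U(F)$ and $k_i\in G_0(\ofr)$; this covers both the ambiguity in choosing a decomposition of a fixed $g$ and the change of representative $g \leadsto gk$ within the coset $g G_0(\ofr)$. Setting $h := t_2^{-1}t_1 \in T(F)$ and using that $T(F)$ normalises $U(F)$, one rewrites $u_2^{-1}(t_2^{-1}t_1)u_1 = k_2 k_1^{-1}$ as $vh \in G_0(\ofr)$ with $v := u_2^{-1}(h u_1 h^{-1}) \in U(F)$. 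The identity above then yields $h\in T(\ofr)$, so $t_1 T(\ofr) = t_2 T(\ofr)$.

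The only real content is thus the nontrivial inclusion in the identity. Write $vh \in G_0(\ofr)$ with $v\in U(F)$ and $h = \diag(h_1,\dots,h_{2n}) \in T(F)$. In each of the two $\GL_n$-blocks the product $vh$ is upper-triangular with the corresponding $h_i$'s on the diagonal, so membership in $G_0(\ofr) = \GL_n(\ofr)\times\GL_n(\ofr)$ forces every entry into $\ofr$ and each block determinant into $\ofr^\times$. The second condition is the product of the corresponding $h_i$'s; since $\ofr$ is a local ring, a product of elements of $\ofr$ is a unit if and only if every factor is a unit. Hence each $h_i\in\ofr^\times$, giving $h\in T(\ofr)$, and then $v = (vh)h^{-1} \in G_0(\ofr)\cap U(F) = U(\ofr)$.

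I do not anticipate any serious obstacle. The existence of some decomposition $g = t_g u_g k$ is the Iwasawa decomposition $G_0(F) = T(F)U(F)G_0(\ofr)$ recorded just above the proposition, and the argument above shows that the $T(F)$-factor is uniquely determined modulo $T(\ofr)$. The mildest subtle point is to be careful that the displayed identity is applied to the correct rearrangement of the two decompositions, which is why one first conjugates $u_1$ past $h$ using $T$-normalisation of $U$ before invoking the local-ring computation.
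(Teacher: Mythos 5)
Your proof is correct and follows essentially the same route as the paper: both reduce to the uniqueness of the Iwasawa decomposition modulo integral points, i.e.\ to showing that $u_2^{-1}t_2^{-1}t_1u_1\in G_0(\ofr)$ forces $t_2^{-1}t_1\in T(\ofr)$. The paper states this implication in one line, while you supply the justification (conjugating $u_1$ past $h$, then reading off the diagonal entries and using that $\ofr$ is local to see each $h_i\in\ofr^\times$), which is a worthwhile filling-in of a step the paper leaves implicit.
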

\begin{proof}
    It suffices to observe that if $t_1u_1 = t_2u_2\in G_0(\ofr)$ then $u_2^{-1}t_2^{-1}t_1u_1\in G_0(\ofr)$ and so $t_2^{-1}t_1\in G_0(\ofr)\cap T(F) = T(\ofr)$.
\end{proof}
Similarly, we can prove:
\begin{prop}
	If $g\in X(G_0,\gamma)$ then $t = p(g)\in X(T,\gamma)$.
\end{prop}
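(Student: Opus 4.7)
The plan is to exploit the block structure of $\gfr_1$ and the special shape $\gamma = (I_n,\beta)$ to reduce the statement to an elementary linear-algebra observation about unipotent--diagonal--unipotent products.

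First I fix a representative $g = tuh$ with $t \in T(F)$, $u \in U(F)$ and $h \in G_0(\ofr)$. Since $\gfr_1(\ofr)$ is stable under $G_0(\ofr)$-conjugation, the condition $g^{-1}\gamma g \in \gfr_1(\ofr)$ is equivalent to
\begin{align*}
u^{-1} t^{-1}\gamma t\, u \in \gfr_1(\ofr).
\end{align*}
Writing $t = (t_1,t_2)$ and $u = (u_1,u_2)$ in the block notation, where $t_i \in T_n(F)$ is diagonal and $u_i \in U_n(F)$ is unipotent upper triangular, a direct computation using the fact that conjugation by $(A,B) \in G_0$ sends $(X,Y)$ to $(A^{-1}XB,\, B^{-1}YA)$ gives
\begin{align*}
u^{-1}t^{-1}\gamma t\, u = \bigl(u_1^{-1}\,(t_1^{-1}t_2)\,u_2,\ u_2^{-1}\,(t_2^{-1}\beta t_1)\,u_1\bigr).
\end{align*}
Both $D_1 := t_1^{-1}t_2$ and $D_2 := t_2^{-1}\beta t_1$ are diagonal matrices in $T_n(F)$, so each block has the shape $u_-^{-1} D u_+$ with $u_\pm$ unipotent upper triangular and $D$ diagonal.

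The key observation is that for any such product, the diagonal entries equal those of $D$: expanding $(u_-^{-1} D u_+)_{ii} = \sum_{k}(u_-^{-1})_{ik} D_{kk} (u_+)_{ki}$, the upper triangularity of $u_-^{-1}$ forces $k \geq i$ and that of $u_+$ forces $k \leq i$, so only $k=i$ contributes and we recover $D_{ii}$. Since the block product lies in $\Mat_n(\ofr)$, its diagonal does too, and hence $D_1, D_2 \in T_n(F)\cap \Mat_n(\ofr) = T_n(\ofr)$. But $t^{-1}\gamma t = (D_1, D_2)$, so $t^{-1}\gamma t \in \gfr_1(\ofr)$, i.e.\ $t \in X(T,\gamma)$.

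There is no real obstacle here beyond correctly tracking the block notation and verifying that the off-diagonal unipotent factors cannot alter the diagonal. The previous proposition already handles well-definedness of the map $p$ up to $T(\ofr)$, so we need only check that one (and hence every) representative $t$ satisfies the integrality condition, which is exactly what the diagonal-entry computation delivers.
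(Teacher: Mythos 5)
Your argument is correct and is essentially the paper's own proof: write $g = tu$ modulo $G_0(\ofr)$, compute the two blocks of $\Ad(g)^{-1}\gamma$ as $u_1^{-1}(t_1^{-1}t_2)u_2$ and $u_2^{-1}(t_2^{-1}\beta t_1)u_1$, and observe that unipotent upper-triangular factors leave the diagonal of a diagonal matrix unchanged, so integrality of the blocks forces integrality of $t_1^{-1}t_2$ and $t_2^{-1}\beta t_1$ (the paper asserts this last step without the explicit index computation you supply). One harmless slip: $T_n(F)\cap\Mat_n(\ofr)$ is not equal to $T_n(\ofr)$, since non-unit integral diagonal entries are allowed on the left; but the conclusion $\Ad(t)^{-1}\gamma\in\gfr_1(\ofr)$ requires only integrality of $D_1$ and $D_2$, which you have established.
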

\begin{proof}
	Write $u = \diag(v,w)$ where $v,w\in\GL_n(F)$ and similarly for $t = \diag(r,s)$ for $r,s\in \GL_n(F)$. Let $g = tu\in X(G_0,\gamma)$, so that $\Ad(g)^{-1}\gamma = \Ad(u)^{-1}\Ad(t)^{-1}\gamma\in \gfr_1(\ofr)$. This is the same as saying that the matrices
	\begin{align*}
		v^{-1}r^{-1}sw, w^{-1}s^{-1}\beta rv
	\end{align*}
	have integral entries. In particular, this implies that $r^{-1}s$ and $s^{-1}\beta r$ have integral entries since $\beta$ is also a diagonal matrix.
\end{proof}

\section{The Dimension of the Fiber}

We wish to determine the dimension of $X(G_0,\gamma)$. In this section, we determine the dimension of a fiber $p^{-1}(t)$, and show that this dimension is independent of $t$. From Example~\ref{exmp:zerodimtorus} shows that the dimension of $X(T,\gamma)$ is zero, and so this is sufficient to calculate the dimension of $X(G_0,\gamma)$.

First, let us introduce some notation. Since $G_0\cong \GL_n\times\GL_n$, for a matrix $A\in G_0(F)$ we use the notation \emph{$(i,j)$ coordinate} to refer to the $(i,j)$ coordinate (in the usual sense) in the first block, and we use \emph{$(i,j)'$ coordinate} for the $(i,j)$ coordinate in the second block.

For positive integers $i$ and $j$, denote by $V_{i,j}$ a copy of the vector space $k$. For each $\ell\in \Z$ write
\begin{align*}
    V_\ell = \bigoplus_{\substack{j - i = \ell\\ 1\leq i,j\leq n} } V_{i,j}.
\end{align*}

Fix a $t\in T(F)$. The fiber $p^{-1}(t)$ is the affine Springer fiber
\begin{align*}
    X(U,\Ad(t)^{-1}\gamma) = \{ u\in U(F)/U(\ofr) : \Ad(u^{-1})\Ad(t)^{-1}\gamma\in \gfr_1(\ofr)\}.
\end{align*}
Define a map $p_1:U(F) \to V_1^2(F)$ by sending $u\in U(F)$ to the element in $V_1^2(F)$ whose $(i,j)$th entry in the first copy of $V_1$ is the $(i,j)$th entry of $\Ad(u)^{-1}\Ad(t)^{-1}\gamma$, and whose $(i,j)$th entry in the second copy of $V_1$ is the $(i,j)'th$ entry of $\Ad(u)^{-1}\Ad(t)^{-1}\gamma$.

\begin{prop}The map $p_1$ induces a well-defined surjective map
\begin{align*}
    q_1:Y_1:=X(U,\Ad(t)^{-1}\gamma)\longrightarrow V_1^2(\ofr)/p_1(U(\ofr)) =: Z_1
\end{align*}
and all the fibers of $q_1:Y_1\to Z_1$ have the same dimension.
\end{prop}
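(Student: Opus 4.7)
The plan is to reduce the proposition to an explicit coordinate computation. Writing $u = \diag(v,w)$ with $v,w$ upper-triangular unipotent in $\GL_n(F)$, and $t = \diag(r,s)$, a direct matrix calculation shows that the first-superdiagonal entries of $v^{-1}(r^{-1}s)w$ and $w^{-1}(s^{-1}\beta r)v$ depend only on the first-superdiagonal entries $v_{i,i+1}$ and $w_{i,i+1}$, and depend on them linearly. Thus $p_1$ factors as $p_1 = L\circ \pi$, where $\pi:U(F) \to V_1^2(F)$ extracts the first-superdiagonal entries and $L:V_1^2(F)\to V_1^2(F)$ is block-diagonal with $2\times 2$ blocks of determinant $x_{i+1}-x_i \neq 0$. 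In particular $L$ is invertible, $p_1(U(\ofr)) = L(V_1^2(\ofr))$ is a full-rank $\ofr$-sublattice of $V_1^2(\ofr)$, and $Z_1$ is a finite-dimensional $k$-vector space.

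Well-definedness of $q_1$ then follows from two observations: for $u\in Y_1$, integrality of $\Ad(u^{-1})\Ad(t)^{-1}\gamma$ forces $p_1(u)\in V_1^2(\ofr)$; and for $u_0\in U(\ofr)$, the standard identity $\pi(uu_0)=\pi(u)+\pi(u_0)$ (the first superdiagonal is additive under products of upper unipotent matrices) combined with linearity of $L$ gives $p_1(uu_0)=p_1(u)+p_1(u_0)\equiv p_1(u)\pmod{p_1(U(\ofr))}$.

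For surjectivity and the uniform fiber-dimension claim together, I would extend the coordinate calculation inductively to all off-diagonals. The same block-matrix analysis shows that for each $\ell\geq 1$, the distance-$\ell$ entries of $\Ad(u^{-1})\Ad(t)^{-1}\gamma$ depend on the distance-$\ell$ entries of $u$ via an invertible $2\times 2$-block-diagonal linear map $L_\ell$ with block determinants $x_{i+\ell}-x_i$, modified only by a polynomial expression in the distance-${<}\ell$ entries of $u$. Given any $a\in V_1^2(\ofr)$, I would construct $u\in Y_1$ with $p_1(u)=a$ by first setting $\pi(u)=L^{-1}(a)$ and then solving for the distance-$\ell$ entries inductively for $\ell=2,3,\ldots,n-1$: at each step, integrality at distance $\ell$ is a solvable equation because $L_\ell$ is invertible, and the solution set is a coset of $L_\ell^{-1}(V_\ell^2(\ofr))$ inside $V_\ell^2(F)$. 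Modulo $V_\ell^2(\ofr)$ this coset is a $k$-vector space of dimension $\sum_i v(x_{i+\ell}-x_i)$, which is independent of $a$. This proves surjectivity of $q_1$ and shows that the fibers are uniformly parametrized with common dimension $\sum_{\ell\geq 2}\sum_i v(x_{i+\ell}-x_i)$.

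The main obstacle is the bookkeeping in the triangular-matrix arithmetic: verifying that the distance-$\ell$ entries of $\Ad(u^{-1})\Ad(t)^{-1}\gamma$ couple to the distance-$\ell$ entries of $u$ through precisely the invertible map $L_\ell$, with no hidden interaction from higher-distance entries of $u$. This is elementary but requires careful tracking of which entries of $v,w,v^{-1},w^{-1}$ contribute to each position, and it is what ultimately produces the factors $x_{i+\ell}-x_i$ matching the expected Kazhdan--Lusztig-type formula in the main theorem.
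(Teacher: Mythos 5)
Your proposal is correct and follows essentially the same route as the paper: the $2\times 2$ blocks with determinant $x_j-x_i$ acting on the superdiagonal entries of $(v,w)$, and the triangular ``distance-$\ell$ entries depend invertibly on distance-$\ell$ entries of $u$ plus lower-order terms'' induction for the fibers. You are in fact somewhat more explicit than the paper, which leaves well-definedness as ``clear'' and does not spell out that surjectivity also requires solving the higher-diagonal integrality conditions, a point your inductive construction handles directly.
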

\begin{proof}
    That $q_1$ is well-defined is clear. Let us prove that the map is surjective. We look at the $(i,j)$th entry and the $(i,j)'$th entry of $\Ad(u)^{-1}\Ad(t)^{-1}\gamma$ for $j - i = 1$ and $1\leq i,j\leq n$; these are respectively
    \begin{equation}\label{eqn:mainform}
        \begin{split}
            f_{ij} &= b_{ij}s_ir_i^{-1} - a_{ij}s_jr_j^{-1},\\
            g_{ij} &= a_{ij}x_ir_is_i^{-1} - b_{ij}x_jr_js_j^{-1}.
    \end{split}
    \end{equation}
    These form the entries of the column vector
    \begin{align*}
        \begin{pmatrix}
            -s_jr_j^{-1} & s_ir_i^{-1}\\
            x_ir_is_i^{-1} & -x_jr_js_j^{-1}
        \end{pmatrix}
        \begin{pmatrix}
            a_{ij}\\
            b_{ij}
        \end{pmatrix}.
    \end{align*}
    To prove the surjectivity of $q_1$, it suffices to observe that the $2\times 2$ matrix appearing in this formula is invertible, since it has determinant $-(x_i - x_j)$, and $\gamma$ being regular is equivalent to $x_i\not= x_j$. Now let us show that the fibers have constant dimension. If we look at the $(i,j)$ and $(i,j)'$ coordinates for the elements on the diagonal $j - i=2$, these are the sum of~\eqref{eqn:mainform} with something depending only on the coordinates in the diagonal $j-i=1$. In other words, to use the notation as in~\eqref{eqn:mainform}, the nonintegral coordinates of $f_{ij}$ and $g_{ij}$ for $j-i = 2$ are uniquely determined once $z\in Z_1(k)$ is chosen. Repeating this argument, the coordinates for $j - i=3$ are a sum of those as in~\eqref{eqn:mainform} with something that depends only on the coordinates for $j-i=1,2$. This gives an isomorphism between the various fibers, showing that they have the same dimension.
\end{proof}

To calculate the dimension of $X(U,\Ad(t)^{-1}\gamma)$, we have to calculate the dimension of any fiber of $q_1$ and of $Z_1$. Write $Y_2 = q_1^{-1}(0)$ and define a map $q_2:Y_2\to V_2(\ofr)/p_2(U_2(\ofr)) =: Z_2$ analogously as above where $U_2$ is the unipotent subgroup of $U$ with each $i,j$-entry zero where $j - i = 1$. 

We recall the following basic theorem in algebraic geometry:
\begin{prop}[\cite{Goertz2010}, Corollary 14.119]\label{thm:agbasic}
    Let $X$ and $Y$ be $k$-schemes of finite type and $f:X\to Y$ be a dominant morphism of finite type. If all the nonempty fibers of $f$ have dimension $r$ then $\dim X = \dim Y + r$.\qedsymbol
\end{prop}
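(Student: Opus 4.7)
The plan is to reduce the statement to Chevalley's classical fiber dimension theorem for dominant morphisms between integral schemes of finite type over a field: for such a morphism $g\colon X'\to Y'$, every nonempty fiber satisfies $\dim g^{-1}(y)\geq \dim X'-\dim Y'$, with equality on a dense open subset of $Y'$, and the function $y\mapsto \dim g^{-1}(y)$ is upper semicontinuous on $Y'$.

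First I would decompose $X=\bigcup_i X_i$ and $Y=\bigcup_j Y_j$ into irreducible components. Because $f$ is dominant and each $Y_j$ is a maximal irreducible closed subset of $Y$, every $Y_j$ equals $\overline{f(X_i)}$ for some $i$, and the restriction $f|_{X_i}\colon X_i\to Y_j$ is a dominant morphism of irreducible schemes. For the bound $\dim X\leq \dim Y+r$, I would choose $X_i$ with $\dim X_i=\dim X$ and set $Y_j=\overline{f(X_i)}$; Chevalley provides a dense open $V\subseteq Y_j$ on which $\dim(f|_{X_i})^{-1}(y)=\dim X_i-\dim Y_j$, and since $(f|_{X_i})^{-1}(y)\subseteq f^{-1}(y)$ has dimension at most $r$, I conclude $\dim X_i-\dim Y_j\leq r$, so $\dim X\leq \dim Y+r$. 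For the reverse inequality, I would pick $Y_j$ with $\dim Y_j=\dim Y$ and intersect the dense open sets supplied by Chevalley for each irreducible component $X_k$ that dominates $Y_j$; on the intersection, the decomposition $f^{-1}(y)=\bigcup_k (f|_{X_k})^{-1}(y)$ combined with $\dim f^{-1}(y)=r$ forces $\max_k(\dim X_k-\dim Y_j)=r$, producing some $X_k$ with $\dim X_k=\dim Y+r$.

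The main obstacle is the bookkeeping of irreducible components: matching a maximal-dimension component of $X$ with a maximal-dimension component of $Y$ via $f$, and ensuring that no component $X_k$ fails to contribute to $f^{-1}(y)$ for the generic $y\in Y_j$ chosen above. Upper semicontinuity of fiber dimension along each restriction, together with the fact that the images $f(X_k)$ are constructible, guarantees that on a sufficiently small dense open of $Y_j$ only the components dominating $Y_j$ contribute nonempty pieces to $f^{-1}(y)$, which is precisely what closes the lower-bound argument. In spirit this is the standard argument found in EGA IV.5 or in Görtz--Wedhorn, where the statement is cited from.
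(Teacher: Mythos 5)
The paper offers no proof of this proposition: it is quoted from G\"ortz--Wedhorn (Corollary 14.119) and marked with a qed symbol, so there is nothing internal to compare your argument against. Your proposal is nevertheless a correct reconstruction of the standard proof. The reduction to Chevalley's fiber-dimension theorem on irreducible components is sound: for the upper bound, restricting $f$ to a top-dimensional component $X_i$ and applying Chevalley to $X_i\to\overline{f(X_i)}$ gives $\dim X_i-\dim\overline{f(X_i)}\le r$; note that $\overline{f(X_i)}$ need not be an irreducible component of $Y$, but that only helps, since $\dim\overline{f(X_i)}\le\dim Y$. For the lower bound, the one point that genuinely needs care is exactly the one you flag: over a sufficiently small dense open subset of a top-dimensional component $Y_j$ --- chosen to avoid the other components of $Y$ and the constructible images of those components of $X$ that do not dominate $Y_j$, and contained in the Chevalley-generic loci of those that do --- the fiber $f^{-1}(y)$ is precisely the union of the generic fibers of the dominating components; it is nonempty because every component of $Y$ is dominated by some component of $X$ (as $f$ is dominant), and the hypothesis then forces $\max_k(\dim X_k-\dim Y_j)=r$, yielding a component of $X$ of dimension $\dim Y+r$. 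This is the argument of EGA IV.5 and of G\"ortz--Wedhorn itself, so the proposal is consistent with the source the paper cites.
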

Applying Proposition~\ref{thm:agbasic} inductively, we obtain
\begin{align*}
    \dim X(U,\Ad(t)^{-1}\gamma) = \sum \dim Z_i.
\end{align*}
To calculate $\dim Z_i = \dim V_i(\ofr)/p_i(U_i(\ofr))$, we recall that $\Ad(u)^{-1}\Ad(t)^{-1}\gamma$ has the same $i,j$ entries for $j - i =0,1$ as
\begin{align*}
    \Ad(t)^{-1}\gamma + [u^{-1} - 1,\Ad(t)^{-1}\gamma].
\end{align*}
We put $-1$ in the commutator so that $u^{-1}-1$ is in the Lie algebra of $U$. We note now that the $i,j$-coordinate for the left-hand side for $j > i$ (i.e. above the diagonal) is the same as the $i,j$-coordinate of $[u^{-1} - 1,\Ad(t)^{-1}\gamma]$, since $\Ad(t)^{-1}\gamma$ is just the diagonal part of $\Ad(u)^{-1}\Ad(t)^{-1}\gamma$.

In other words, what we see is that $Z_i$ is the space $V_i(\ofr)$ modulo the image of $\ufr_i$ under the linear map $[-,\Ad(t)^{-1}\gamma]$. So, $\sum\dim Z_i$ is the dimension of the $k$-vector space $\oplus V_i/[\ufr(\ofr),\Ad(t)^{-1}\gamma]$. By reducing the square matrix of $[-,\Ad(t)^{-1}\gamma]$, or equivalently of $\varphi := [\Ad(t)^{-1}\gamma,-]$ to diagonal form via row and column operations, we see that $\sum \dim Z_i = v(\det\varphi)$.

We next determine a formula for $v(\det(\varphi))$. For any $1 \leq i,j\leq n$ denote by $e_{ij}$ the matrix whose only nonzero entry is~$1$ at $(i,j)$. Then an $F$-basis of $\ufr(F)$ is $\{ (e_{ij},\mathbf{0}) : i < j\} \cup \{ (\mathbf{0},e_{ij}) : i < j\}$ where $\mathbf{0}$ is the zero matrix. We also use this notation for a basis of $\vfr(F)$, where $\vfr\subset\gfr_1$ is the subspace consisting of pairs $(X,Y)\in\gfr_1$ with $X$ and $Y$ strictly upper triangular. For example, in $\ufr(F)$, the basis element $(e_{ij},\mathbf{0})$ corresponds to the block diagonal matrix $\diag(e_{ij},\mathbf{0})$, whereas used to denote a basis element of $\vfr(F)$, it corresponds to the block antidiagonal matrix
\begin{align*}
	\begin{pmatrix}
		\mathbf{0} & e_{ij}\\
		\mathbf{0} & \mathbf{0}
	\end{pmatrix}
\end{align*}
These bases are also bases for $\ufr(\ofr)$ and $\vfr(\ofr)$ as free $\ofr$-modules. We need to write down the $n(n-1)\times n(n-1)$ matrix of $\varphi$. Recall that we have fixed $t\in X(T,\gamma)$. Write $t = (r,s)$ where $r = \diag(r_1,\dots,r_n)$ and $s= (s_1,\dots,s_n)$. Then $\Ad(t)^{-1}\gamma$ is the matrix 
\begin{align*}
(\diag(r_1^{-1}s_1,\dots,r_n^{-1}s_n),\diag(s_1^{-1}r_1x_1,\dots,s_n^{-1}r_nx_n))\in\gfr_1(F).
\end{align*}
We compute the matrix with respect to the ordered basis 
\begin{align*}
    \{ (e_{11},\mathbf{0}),(e_{12},\mathbf{0}),\dots,(e_{n-1,n},\mathbf{0}),(\mathbf{0},e_{11}),\dots,(\mathbf{0},e_{n-1,n}) \}
\end{align*}
for $\ufr(F)$ and the same notation denotes the ordered basis we use for $\vfr(F)$. Then it is somewhat trivial to write down the matrix corresponding to $\varphi$:
\begin{equation}\label{eqn:largematrix}
\varphi = \begin{pmatrix}
  -a_2 & ~ & ~ & ~ & ~ & a_1 & ~ & ~ & ~ & ~\\
  ~ & \ddots & ~ & ~ & ~ & ~ & \ddots & ~ & ~ & ~\\
  ~ & ~ & -a_n & ~ & ~ & ~ & ~ & a_1 & ~ & ~\\
  ~ & ~ & ~ & \ddots & ~ & ~ & ~ & ~ & \ddots & ~\\
  ~ & ~ & ~ & ~ & -a_n & ~ & ~ & ~ & ~ & a_{n-1}\\
  b_1x_1 & ~ & ~ & ~ & ~ & -b_2x_2 & ~ & ~ & ~ & ~\\
  ~ & \ddots & ~ & ~ & ~ & ~ & \ddots & ~ & ~ & ~\\
  ~ & ~ & b_1x_1 & ~ & ~ & ~ & ~ & -b_nx_n & ~ & ~\\
  ~ & ~ & ~ & \ddots & ~ & ~ & ~ & ~ & \ddots & ~\\
  ~ & ~ & ~ & ~ & b_{n-1}x_{n-1} & ~ & ~ & ~ & ~ & -b_nx_n\\
\end{pmatrix}
\end{equation}
Here, $a_i = r_i^{-1}s_i$ and $b_i = s_i^{-1}r_i$. Although it is a bit difficult to see from the typesetting, the only nonzero elements are on the main diagonals of each of the four $n(n-1)/2\times n(n-1)/2$ blocks. For example, if $n=2$, the matrix of $\varphi$ is the $2\times 2$ matrix
	\begin{align*}
		\begin{pmatrix}
			-r_2^{-1}s_2 & r_1^{-1}s_1\\
			s_1^{-1}r_1x_1 & -s_2^{-1}r_2x_2
		\end{pmatrix}
	\end{align*}
Then $v(\det(\varphi)) = v(x_1 - x_2)$, and does not depend on $t$. This holds true in general:
\begin{thm}\label{thm:main}
	The determinant $\det(\varphi)$ does not depend on the chosen $t\in X(T,\gamma)$, and
	\begin{align*}
		v(\det(\varphi)) = \sum_{i < j} v(x_i - x_j).
	\end{align*}
	In particular, since $X(T,\gamma)$ is zero-dimensional (Example~\ref{exmp:zerodimtorus}), this is also the dimension of the affine Springer fiber $X(G_0,\gamma)$.
\end{thm}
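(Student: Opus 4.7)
The plan is to expose the block structure of the matrix in (\ref{eqn:largematrix}) and reduce the determinant calculation to a product of $2\times 2$ determinants. Each of the four $n(n-1)/2 \times n(n-1)/2$ blocks of $\varphi$ is diagonal, so after simultaneously reordering the domain and codomain bases to pair $(e_{ij},\mathbf{0})$ with $(\mathbf{0},e_{ij})$ for each $i<j$, the matrix of $\varphi$ becomes block diagonal with $n(n-1)/2$ blocks of size $2\times 2$, one for each pair $i<j$. Reading the entries directly off (\ref{eqn:largematrix}) (or recomputing them via $\varphi(Z) = [\Ad(t)^{-1}\gamma, Z]$ applied to the two basis elements indexed by $(i,j)$), the block is
\begin{align*}
    M_{ij} = \begin{pmatrix} -a_j & a_i \\ b_i x_i & -b_j x_j \end{pmatrix},
\end{align*}
with $a_k = r_k^{-1}s_k$ and $b_k = s_k^{-1}r_k$, generalizing the $n=2$ case already worked out in the text.

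Next I would compute $\det M_{ij}$ using the identity $a_k b_k = r_k^{-1}s_k \cdot s_k^{-1}r_k = 1$, which holds for every $k$. This gives
\begin{align*}
    \det M_{ij} = a_j b_j x_j - a_i b_i x_i = x_j - x_i,
\end{align*}
which is manifestly independent of the coordinates of $t = (r,s)$. Multiplying over all pairs $i<j$ yields $\det\varphi = \pm\prod_{i<j}(x_i-x_j)$, and applying the valuation gives $v(\det\varphi) = \sum_{i<j} v(x_i - x_j)$.

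Combining with the earlier sections: the fibration $p\colon X(G_0,\gamma)\to X(T,\gamma)$ has fibers $X(U,\Ad(t)^{-1}\gamma)$ of dimension $v(\det\varphi)$, which is now known to be independent of $t$; and $X(T,\gamma)$ is zero-dimensional by Example \ref{exmp:zerodimtorus}. Hence $\dim X(G_0,\gamma) = \sum_{i<j} v(x_i - x_j)$, as claimed. The only real obstacle is bookkeeping around the reordering: one must check that each column of $\varphi$ has exactly two nonzero entries, lying in mirror positions across the horizontal midline of (\ref{eqn:largematrix}), so that the block-diagonalization is forced. Once that is in place, the identity $a_k b_k = 1$ does all the work, and in particular explains the independence from $t$ that is not apparent from the matrix itself.
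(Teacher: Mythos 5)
Your proof is correct, and the determinant evaluation takes a genuinely different route from the paper's. The paper keeps the basis ordering of \eqref{eqn:largematrix} and does global row reduction: it left-multiplies by a diagonal matrix $D$ chosen so that the first $n(n-1)/2$ rows duplicate the corresponding lower rows in the left block, subtracts rows to reach an upper triangular matrix with diagonal $\diag(A,B)$, and recovers $\det\varphi = \det(\diag(A,B))/\det(D) = (-1)^{n(n-1)/2}\prod_{i<j}(x_i-x_j)$; there the independence from $t$ only appears after the products of the $b_i$'s cancel at the very end. Your simultaneous reordering of the domain and codomain bases to pair $(e_{ij},\mathbf{0})$ with $(\mathbf{0},e_{ij})$ is legitimate because $\Ad(t)^{-1}\gamma$ is diagonal in each block, so $\varphi = [\Ad(t)^{-1}\gamma,-]$ preserves each two-dimensional coordinate subspace indexed by a pair $i<j$ --- exactly the ``two nonzero entries in mirror position'' check you flag, which is visible in \eqref{eqn:largematrix} since all four blocks are diagonal. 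Conjugating by the same permutation on both sides leaves the determinant unchanged, so $\det\varphi = \prod_{i<j}\det M_{ij}$ with $M_{ij}$ exactly the $n=2$ matrix displayed in the text, and $\det M_{ij} = a_jb_jx_j - a_ib_ix_i = x_j - x_i$ by $a_kb_k=1$. This agrees with the paper's value up to the (irrelevant) sign, and it makes the $t$-independence transparent block by block rather than only after global cancellation. Both arguments then conclude identically: the fibers $X(U,\Ad(t)^{-1}\gamma)$ all have dimension $v(\det\varphi)$, $X(T,\gamma)$ is zero-dimensional by Example~\ref{exmp:zerodimtorus}, and Proposition~\ref{thm:agbasic} gives $\dim X(G_0,\gamma) = \sum_{i<j}v(x_i-x_j)$.
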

\begin{proof}
	Let us abuse notation and write $\varphi$ for the matrix of $\varphi$ as in~\eqref{eqn:largematrix}. First, multiply $\varphi$ on the left by the diagonal matrix
	\begin{align*}
		D &= \diag(-b_2b_1x_1,-b_3b_1x_1,\dots,-b_nb_1x_1,-b_3b_2x_2,\dots,-b_nb_2x_2,\dots,-b_nb_{n-1}x_{n-1},1,1,\dots,1)\\
	\end{align*}
In other words, $D$ corresponds to multiplying each of the first $n(n-1)/2$ rows of $\varphi$ by nonzero elements in such a way so that the rows of the upper left $n(n-1)/2\times n(n-1)/2$ block are the same as the rows of the corresponding lower left block. By subtracting the $k$th row from the $[n(n-1)/2 + k]$th row for $k=1,2,\dots,n(n-1)/2$, we get a new matrix that is upper triangular, and whose diagonal entries are those of $\diag(A,B)$, where
\begin{align*}
	A &= \diag(b_1x_1,b_1x_1,\dots,b_2x_2,b_2x_2,\dots,b_2x_2,\dots,b_{n-1}x_{n-1}),\\
	B &= \diag(b_2(x_1-x_2),b_3(x_1-x_3),\dots,b_n(x_{n-1}-x_n)).
\end{align*}
Hence, the determinant of $\varphi$ is $\det(\diag(A,B))/\det(D)$, which is
\begin{align*}
	\det(\varphi) &= \frac{\prod_{i < j} b_ib_jx_i(x_i - x_j)}{\prod_{i < j} -(b_ix_ib_j)}
	= (-1)^{n(n-1)/2} \prod_{i < j} (x_i - x_j).
\end{align*}
Taking the valuation of this gives the desired result.
\end{proof}
\begin{rem}\label{rem:cartansub}By definition, a \emph{Cartan subspace} is a subspace $\afr$ of $\gfr_1$ that is maximal with respect to being commutative and consisting entirely of semisimple elements. One easily calculates that the centraliser of $\gamma$ is of the form
\begin{align*}
    \left\{ (X, X\beta) : X\beta = \beta X \right\}.
\end{align*}
Since $\gamma = \diag(x_1,\dots,x_n)$ is regular, which is the same thing as saying $x_i\not= x_j$ for all $i\not= j$, we see that the centraliser of $\gamma$ is just the commutative subspace
\begin{align*}
    \afr(R) = \left\{ \bigl(\diag(c_1,\dots,c_n), \diag(c_1x_1,\dots,c_nx_n)\bigr) : c_i\in R\right\},
\end{align*}
which consists entirely of semisimple elements, and hence is a Cartan subspace. Since $\gamma' = \bigl( \diag(c_1,\dots,c_n),\diag(c_1x_1,\dots,c_nx_n)\bigr)\in \afr(k)$ is conjugate to $\bigl(I_n, \diag(c_1^2x_1,\dots,c_n^2x_n)\bigr)$, we can also use our formula to compute that $\dim X_{\gamma'} = \sum_{i < j} v(c_i^2x_i - c_j^2x_j)$ whenever $\gamma'$ is regular. This follows since the two affine Springer fibers for two conjugate elements are isomorphic.
\end{rem}
\begin{rem}
One can define a map $X(G_0,\gamma)\to \G_m(F)/\G_m(\ofr)\cong \Z$ by $(A,B)\mapsto v(\det(A^{-1}B))$, which has nonempty fibers over a finite set of points in $\Z$. The fiber over the point $0\in \Z$ is the affine Springer fiber $X(\GL_n,\beta)$, which by Kazhdan and Lusztig's formula also has dimension $\sum_{i < j} v(x_i - x_j)$. Theorem~\ref{thm:main} then says that all the other fibers also have this dimension.\end{rem}
\begin{exmp}\label{exmp:unitary}
    We can use the formula in Theorem~\ref{thm:main} to calculate the dimension of affine Springer fibers for forms as well, since we can always base change with respect to a field extension of $k$ to calculate dimension. For instance, let $k'/k$ be a quadratic extension and $E/F$ the corresponding extension of Laurent series fields. We use the notation $\overline{x}$ to denote the action of the nontrivial element of $\Gal(E/F)$ on $x$. Define the $n\times n$ matrix $J_n$ to be the antidiagonal matrix
\begin{align*}J_n=
	\begin{pmatrix}
		~ & ~ & 1\\
		~ & \diagup & ~\\
		1 & ~ & ~
	\end{pmatrix}.
\end{align*}
We define the $n\times n$ unitary group as the functor on $F$-algebras $R$ given by
\begin{align*}
	\UU_n(R) = \{ g\in \GL_n(R\otimes_F E) : J_n\overline{g}^{-t}J_n = g\}
\end{align*}
For ease of exposition, we just consider the embedding $\UU_{2n}\times\UU_{2n}\hookrightarrow \UU_{4n}$. Let $\theta$ be the involution defined as conjugation by
\begin{align*}
    \begin{pmatrix}
        ~ & ~ & ~ & J_n\\
        ~ & ~ & -J_n & ~\\
        ~ & -J_n & ~ & ~\\
        J_n & ~  & ~ & ~
    \end{pmatrix}.
\end{align*}
One checks that
\begin{align*}
    \afr(R) = \left\{ \diag(x_1,\dots,x_{2n},-x_{2n},\dots,-x_1) : x_i\in R \right\}
\end{align*}
is a maximal commuting subspace of semisimple elements in the $-1$-eigenspace of $\theta$ on the Lie algebra $\ufr_{2n}$ of $\UU_{4n}$. The representation of $\UU_{4n}^\theta\cong \UU_{2n}\times\UU_{2n}$, after base change to $E$, is conjugate to the $\GL_{2n}\times\GL_{2n}\hookrightarrow \GL_{4n}$ representation we have studied in this paper. Letting $\beta = \diag(x_1,\dots,x_{2n})$, the semsimple element $\gamma = \diag(x_1,\dots,x_{2n},-x_{2n},\dots,-x_1)\in\afr(F)$ is conjugate to 
\begin{align*}
    \begin{pmatrix}
        0 & \beta\\
        \beta & 0
    \end{pmatrix},
\end{align*}
which is in turn conjugate to
\begin{align*}
    \begin{pmatrix}
        0 & I_{2n}\\
        \beta^2 & 0
    \end{pmatrix}.
\end{align*}
Hence, by the formula in Theorem~\ref{thm:main},
\begin{align*}
    \dim X(\UU_{2n}\times\UU_{2n},\gamma) = \sum_{i < j}^{2n} [v(x_i - x_j) + v(x_i + x_j)].
\end{align*}
We note that a similar formula also holds for $\UU_n\times\UU_n\hookrightarrow \UU_{2n}$ when $n$ is odd.
\end{exmp}

In general, not all Cartan subspaces will be $G_0(F)$-conjugate. However, it is tempting to make the following conjecture that we hope to resolve in a future paper:
\begin{conj}
	For any regular semismple $\gamma = \left(\begin{smallmatrix} 0 & X\\ Y & 0\end{smallmatrix}\right) \in\gfr_1(F)$,
	\begin{align*}
		\dim_k X(G_0,\gamma) = v(\det([XY,-]).
	\end{align*}
    As before, $XY$ is considered as an element of $\mathfrak{gl}_n$ and $[XY,-]$ is the commutator map restricted to the Lie algebra of the upper triangular Borel subgroup.
\end{conj}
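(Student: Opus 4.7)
The plan is to generalize the two-step strategy of Theorem~\ref{thm:main}. We reduce $\gamma$ to a normal form, exploit the $\Z$-fibration from the remark following that theorem, and reduce the computation to the Kazhdan-Lusztig formula for $\GL_n$.

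Regular semisimplicity of $\gamma = (X, Y)$ forces $X, Y \in \GL_n(F)$, since the centralizer of $\gamma$ in $G_0(F)$ is isomorphic to the centralizer of $XY$ in $\GL_n$ precisely when $X$ is invertible, and attains the minimal dimension $n$ only when $XY$ is itself regular semisimple. Conjugating by $(I, X) \in G_0(F)$ brings $\gamma$ to the normal form $(I, M)$ with $M := XY \in \gl_n(F)$ regular semisimple. The right-hand side $v(\det[XY, -]|_{\mathfrak{n}})$ is to be interpreted as the $G_0(F)$-conjugation invariant quantity $\sum_{i < j} v(\lambda_i - \lambda_j)$, where $\lambda_1,\dots,\lambda_n \in \overline{F}$ are the eigenvalues of $M$; this interpretation recovers the computation in Theorem~\ref{thm:main} when $M$ is $F$-split. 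It thus suffices to prove the formula for $\gamma = (I, M)$.

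Consider next the map $f : X(G_0, \gamma) \to \G_m(F)/\G_m(\ofr) \cong \Z$ defined by $(A, B) \mapsto v(\det(A^{-1}B))$, as in the remark after Theorem~\ref{thm:main}. The fiber $f^{-1}(0)$ identifies canonically with $X(\GL_n, M)$: the condition $v(\det(A^{-1}B)) = 0$ together with integrality of $A^{-1}B$ forces $A^{-1}B \in \GL_n(\ofr)$, which can be absorbed by a $G_0(\ofr)$-translation to impose $B = A$, reducing the remaining integrality condition to $A^{-1}MA \in \gl_n(\ofr)$. By the Kazhdan-Lusztig formula (with Bezrukavnikov's extension to non-split regular semisimple elements), $\dim f^{-1}(0) = \sum_{i < j} v(\lambda_i - \lambda_j) = v(\det[M, -]|_{\mathfrak{n}})$.

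The main obstacle is proving that every nonempty fiber $f^{-1}(\ell)$ has the same dimension as $f^{-1}(0)$. In Theorem~\ref{thm:main}, this relied on the Iwasawa fibration $X(G_0, \gamma) \to X(T, \gamma)$ and a Schur-complement-style computation showing that $\det \varphi_t$ is independent of $t \in X(T, \gamma)$. For general $M$, the off-block matrix $Q = s^{-1}Mr$ appearing in $\Ad(t)^{-1}\gamma$ is no longer diagonal, so commutation with $\Ad(t)^{-1}\gamma$ fails to preserve the superdiagonal filtration on $\ufr$, and the original iterative argument breaks down. I would attack this via either (i) replacing the split torus $T$ and Borel $B$ with a torus $T_\gamma$ and Borel $B_\gamma$ adapted to a Cartan subspace containing $\gamma$, so that the commutator respects the natural filtration coming from the root system of $T_\gamma$; or (ii) constructing direct isomorphisms $f^{-1}(\ell) \xrightarrow{\sim} f^{-1}(0)$ via multiplication by suitable elements of $\GL_n(F)$, using the centralizer of $M$ to ensure compatibility with the integrality condition. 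Combined with the identification of $f^{-1}(0)$ above, either route would yield the conjecture.
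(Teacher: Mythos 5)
This statement is an open conjecture in the paper --- the author explicitly defers its resolution to future work and supplies no proof --- so your proposal has to stand entirely on its own, and as written it does not. The decisive step, that every nonempty fiber of $f:(A,B)\mapsto v(\det(A^{-1}B))$ has the same dimension as $f^{-1}(0)$, is exactly the content of the conjecture, and you leave it unproved: you correctly diagnose why the paper's iterative superdiagonal argument breaks down when $Q=s^{-1}Mr$ is not diagonal, but then offer only two speculative strategies, (i) and (ii), without carrying either out. Note that $f^{-1}(\ell)$ for $\ell>0$ is a space of \emph{pairs} of lattices $L_2\subseteq L_1$ with $[L_1:L_2]=q^{\ell}$, $ML_1\subseteq L_2$, and $L_2\subseteq L_1$, which is not visibly isomorphic to the single-lattice space $f^{-1}(0)=\{L: ML\subseteq L\}$; producing such an isomorphism (or an equidimensionality statement) is where all the work lies.

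There is also a substantive error in the part you do claim to have settled. Your interpretation of the right-hand side as $\sum_{i<j}v(\lambda_i-\lambda_j)$, and the assertion that $\dim f^{-1}(0)=\dim X(\GL_n,M)$ equals this sum ``by Kazhdan--Lusztig with Bezrukavnikov's extension,'' is false when $M$ is not split over $F$. Bezrukavnikov's formula is $\dim X_M=\tfrac12\bigl(d(M)-c(M)\bigr)$ with $d(M)=v(\det(\mathrm{ad}(M);\gl_n/\mathfrak{z}(M)))=\sum_{i\neq j}v(\lambda_i-\lambda_j)$ and $c(M)=n-(\text{split rank of the centralizer of }M)$; the correction $c(M)/2$ vanishes only in the split case. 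For $n=2$ and $M$ elliptic with eigenvalues $\pm\sqrt{t}$ one has $\sum_{i<j}v(\lambda_i-\lambda_j)=\tfrac12$, which is not even an integer, while $\dim X_M=0$. So for elliptic $\gamma$ your reduction, even if completed, would compute $\tfrac12(d(M)-c(M))$ rather than your proposed reading of $v(\det([XY,-]))$; either the conjecture's right-hand side must be reinterpreted to include the $c(M)$ term or the conjecture fails for non-split classes, and your proposal does not confront this. (A minor further point: the invertibility of $X$ and $Y$ follows from \emph{semisimplicity} --- closedness of the $G_0$-orbit --- not from the centralizer-dimension count you give; already for $n=1$ the element $(1,0)$ has centralizer of the minimal dimension yet is not semisimple.)
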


\bibliographystyle{alpha}
\bibliography{/media/jpolak/KINGSTON/math_papers/polakmain.bib}

\end{document}